\newcommand{\Z}{\mathbb Z}
\newtheorem{theorem}{Theorem}[section]
\newtheorem{proposition}[theorem]{Proposition}
\newtheorem{corollary}[theorem]{Corollary}
\theoremstyle{definition}
\newtheorem{example}[theorem]{Example}
\begin{document}

\title{Some minimal elements for a partial order of prime knots}

\author{Teruaki Kitano and Masaaki Suzuki}
\thanks{2000 \textit{Mathematics Subject Classification}.\/ 57M27.}

\thanks{{\it Key words and phrases.\/}
knot group, epimorphism, partial order.}

\address{Department of Information Systems Science, 
Faculty of Engineering, 
Soka University, 
Tangi-cho 1-236, 
Hachioji, Tokyo 192-8577, Japan}

\email{kitano@soka.ac.jp}

\address{Department of Frontier Media Science, 
School of Interdisciplinary Mathematical Sciences, 
Meiji University, 
4-21-1 Nakano, Nakano-ku, Tokyo, 164-8525, Japan}

\email{macky@fms.meiji.ac.jp}

\maketitle

\begin{abstract}
A partial order on the set of prime knots can be defined 
by the existence of an epimorphism between knot groups. 
We prove that 
all the prime knots with up to $6$ crossings 
are minimal. 
We also show that each fibered knot with the irreducible Alexander polynomial is minimal. 
\end{abstract}
\section{Introduction}

Let $K$ be a non-trivial prime knot in $S^3$ and $G(K)$ the knot group of $K$, 
which is the fundamental group of $S^3\setminus K$. 
For two knots $K, K'$, 
we write $K \geq K'$ if there exists an epimorphism $\varphi :G(K)\to G(K')$ 
which preserves meridians. 
This relation $\geq$ gives a partial order on the set of prime knots. 
In \cite{KS05, KS11, HKMS11, HKMS12}, this partial order is determined 
for the set of the prime knots with up to $11$ crossings. 
It showed that the knots $3_1,4_1,5_1,5_2,6_1,6_2, 6_3$ are minimal in this set. 
In this paper, we prove that these knots are minimal in the set of all prime knots. 
Here we call a knot $K$ to be minimal 
if $G(K)$ does not surject onto the knot group of any non-trivial knot. 
Note that a knot group always admits an epimorphism onto the knot group of the trivial knot. 

\begin{theorem}
The knots $3_1,4_1,5_1,5_2,6_1,6_2, 6_3$ are minimal in the set of all prime knots. 
\end{theorem}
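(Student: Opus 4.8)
The plan is to treat the seven knots in two batches, according to whether or not they are fibered. As a preliminary reduction, observe that it suffices to rule out a meridian‑preserving epimorphism $\varphi\colon G(K)\to G(K')$ when $K'$ is a non‑trivial \emph{prime} knot: if $K'$ were composite, then $G(K')$ would already surject meridian‑preservingly onto the group of each of its non‑trivial prime summands, and composing with $\varphi$ would produce a counterexample with prime target.

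\emph{Batch 1: the fibered knots.} Among $3_1,4_1,5_1,5_2,6_1,6_2,6_3$, exactly the five knots $3_1,4_1,5_1,6_2,6_3$ are fibered, with Alexander polynomials
\[
t^2-t+1,\qquad t^2-3t+1,\qquad t^4-t^3+t^2-t+1,\qquad t^4-3t^3+3t^2-3t+1,\qquad t^4-3t^3+5t^2-3t+1 .
\]
Two of these are cyclotomic, and for the others one checks irreducibility over $\Q$ directly (no rational root, and for the two quartics no factorization into two monic integer quadratics). Hence the criterion that a fibered knot with irreducible Alexander polynomial is minimal, announced in the abstract, applies and disposes of $3_1,4_1,5_1,6_2,6_3$ at once.

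\emph{Batch 2: the non‑fibered knots $5_2$ and $6_1$.} These are genus‑one two‑bridge knots with $\D_{5_2}(t)=2t^2-3t+2$ and $\D_{6_1}(t)=2t^2-5t+2=(2t-1)(t-2)$, of determinant $7$ and $9$ respectively. Suppose $\varphi\colon G(K)\to G(K')$ is a meridian‑preserving epimorphism with $K\in\{5_2,6_1\}$ and $K'$ a non‑trivial prime knot. Since $\varphi$ induces a surjection of Alexander modules, $\D_{K'}(t)\mid\D_K(t)$ in $\Z[t,t^{-1}]$; as $2t^2-3t+2$ is irreducible and the factors $2t-1,\,t-2$ of $\D_{6_1}$ are interchanged by $t\mapsto t^{-1}$ (so neither is, up to units, symmetric), the only candidates are $\D_{K'}\doteq 1$ or $\D_{K'}\doteq\D_K$. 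On the other hand $K$ is two‑bridge, so $G(K)$ — hence its quotient $G(K')$ — is generated by two meridians; invoking the characterization of two‑bridge knots as precisely the knots whose group has meridional rank two, $K'$ is itself two‑bridge. A non‑trivial two‑bridge knot has $\det K'=|\D_{K'}(-1)|\ge 3$, so $\D_{K'}\not\doteq 1$, whence $\D_{K'}\doteq\D_K$ and $\det K'=\det K$. But the only two‑bridge knots of determinant $7$ are $5_2$ and $T(2,7)$, and of determinant $9$ only $6_1$ and $T(2,9)$, and in each case exactly one has Alexander polynomial $\D_K$, namely $K$ itself. Therefore $K'=K$, $\varphi$ is an isomorphism, and $K$ has no non‑trivial knot strictly below it.

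The step I expect to be the main obstacle is the one hidden in Batch 2: the passage from ``$G(K')$ is generated by two meridians'' to ``$K'$ is two‑bridge,'' i.e.\ the meridional‑rank‑two case of the meridional rank conjecture. This is exactly what rules out exotic targets with trivial Alexander module (untwisted doubles and the like), which the divisibility of Alexander polynomials alone cannot exclude. Once this input is granted, the remainder of Batch 2 is a short computation with determinants and the classification of two‑bridge knots, and Batch 1 is immediate from the quoted fibered‑knot criterion.
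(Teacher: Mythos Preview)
Your proof is correct and close in spirit to the paper's, with two mild variations worth noting. For the fibered knots you invoke the criterion of Proposition~3.1 (fibered with irreducible Alexander polynomial implies minimal) as a single black box, whereas the paper handles $3_1,4_1$ and then $5_1,6_2,6_3$ separately via Propositions~\ref{fiber_genus} and~\ref{fiber_samegenus} directly; since Proposition~3.1 is itself an immediate corollary of those two propositions, this is the same argument packaged more uniformly. For $5_2$ and $6_1$ both arguments first reduce to two-bridge targets---your meridional-rank-two step is precisely the Boileau--Boyer--Reid--Wang result the paper cites---but the endgames diverge: the paper invokes the Boileau--Boyer divisibility $p'\mid p$ for epimorphisms $S(p,q)\to S(p',q')$ (so $5_2=S(7,3)$ is immediate, and for $6_1=S(9,7)$ one need only exclude $3_1$), while you instead force $\Delta_{K'}\doteq\Delta_K$, read off the determinant, and enumerate the two two-bridge knots of determinant $7$ (resp.\ $9$). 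Your route trades the deeper Boileau--Boyer input for a short classification check; the paper's is quicker once that input is in hand.
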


The same statement holds for another partial order 
derived from an epimorphism between knot groups (not necessary to preserve meridians). 

\section{Proof of results}

First, we review the following fact on a fibered knot. 

\begin{proposition}[\cite {KS08}]\label{fiber_genus}
If $K \geq K'$ and $K$ is fibered, 
then $K'$ is also fibered and $g(K)$ is greater than or equal to $g(K')$, 
where $g(K)$ is the genus of $K$. 
\end{proposition}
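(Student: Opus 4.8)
The plan is to reduce the statement to Stallings' group-theoretic characterization of fibered knots. Recall that a knot $J$ is fibered if and only if the commutator subgroup $[G(J),G(J)]$ is finitely generated, and that in this case it is free of rank $2g(J)$, being the fundamental group of the fiber, which is a once-punctured orientable surface of genus $g(J)$. The first step is a purely group-theoretic observation: any surjection $\varphi\colon G\twoheadrightarrow G'$ carries $[G,G]$ onto $[G',G']$, because $[G',G']=[\varphi(G),\varphi(G)]=\varphi([G,G])$. Applying this to the epimorphism $\varphi\colon G(K)\to G(K')$, we obtain an epimorphism $[G(K),G(K)]\twoheadrightarrow[G(K'),G(K')]$; note that the meridian-preserving hypothesis plays no role here, surjectivity alone being enough (indeed surjectivity already forces $\varphi$ to induce an isomorphism on $H_1\cong\Z$).

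Now suppose $K$ is fibered. By Stallings' theorem $[G(K),G(K)]$ is finitely generated, hence so is its homomorphic image $[G(K'),G(K')]$. The converse direction of Stallings' theorem then shows that $K'$ is fibered as well, and both commutator subgroups are free, say $[G(K),G(K)]\cong F_{2g(K)}$ and $[G(K'),G(K')]\cong F_{2g(K')}$. It remains to compare the ranks. Since $F_{2g(K')}$ is a quotient of $F_{2g(K)}$, it is generated by $2g(K)$ elements; but a free group of rank $n$ admits no generating set of fewer than $n$ elements (look at its abelianization $\Z^n$), so $2g(K')\le 2g(K)$, that is, $g(K')\le g(K)$.

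I do not expect a genuine obstacle here: the argument is an assembly of standard facts, and the only places needing care are (i) citing the correct form of Stallings' fibration theorem in both directions, namely that finite generation of the commutator subgroup of a knot group is equivalent to fiberedness, and (ii) the identification of the rank of that free group with $2g(J)$ via the fiber surface, which is precisely what makes the genus inequality fall out. An alternative route uses the infinite cyclic cover $\widetilde X$ of the knot exterior: $K$ is fibered exactly when $H_1(\widetilde X;\Z)$ is finitely generated as an abelian group, and this property descends to $K'$ through the induced surjection of $\Z[t^{\pm1}]$-modules, with the $\Z$-rank bounding $2g$ from below; but the commutator-subgroup formulation is cleaner and yields the genus bound most directly.
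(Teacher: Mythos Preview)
Your proof is correct and follows essentially the same route as the paper: pass to commutator subgroups, use that an epimorphism restricts to an epimorphism there, invoke Stallings' fibration theorem to conclude $K'$ is fibered, and compare ranks of the resulting free groups to get the genus inequality. You are more explicit than the paper about why the rank comparison works, and your identification of the rank as $2g(K)$ (rather than $g(K)$ as the paper writes) is the correct value for the fundamental group of a once-punctured genus-$g$ surface; this has no effect on the argument, since the inequality $g(K)\ge g(K')$ follows either way.
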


\begin{proof}
An epimorphism between knot groups induces 
an epimorphism between their commutator groups. 
Since $K$ is fibered, 
the commutator subgroup $[G(K),G(K)]$ is the free group of rank $g(K)$. 
This implies $[G(K'),G(K')]$ is also finitely generated. 
By using the fibration theorem by Stallings \cite{S}, 
we see that 
$[G(K'),G(K')]$ is isomorphic to the fundamental group of a compact surface $S$
and that $K'$ is a fibered knot with the fiber $S$. 
Further we have $g(K)\geq g(K')$. 
\end{proof}

\begin{corollary}[\cite {KS08}]
The knots $3_1$ and $4_1$ are minimal.
\end{corollary}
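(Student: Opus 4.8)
The plan is to leverage Proposition~\ref{fiber_genus} together with the classification of genus-one fibered knots. Both $3_1$ and $4_1$ are fibered with fiber a once-punctured torus, so $g(3_1)=g(4_1)=1$ and $[G(3_1),G(3_1)]\cong[G(4_1),G(4_1)]\cong F_2$, the free group of rank $2$. Suppose $K\in\{3_1,4_1\}$ and $K\ge K'$ for some non-trivial knot $K'$ with $K'\neq K$. By Proposition~\ref{fiber_genus}, $K'$ is fibered and $1\le g(K')\le g(K)=1$, hence $g(K')=1$.

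Next I would invoke the classical fact that $3_1$ and $4_1$ are the only fibered knots of genus one: the monodromy lies in the mapping class group of the once-punctured torus, which is $\SL(2,\Z)$, and the constraint that the total space be $S^3$ (in particular $\Delta_{K'}(1)=\pm1$, which forces the trace of the monodromy to be $1$ or $3$) pins the knot down to $3_1$ or $4_1$. Therefore $K'\in\{3_1,4_1\}$, and since $K'\neq K$ we are reduced to excluding the two "cross" epimorphisms $G(3_1)\twoheadrightarrow G(4_1)$ and $G(4_1)\twoheadrightarrow G(3_1)$.

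To rule these out, note that an epimorphism $\varphi\colon G(K)\to G(K')$ restricts to an epimorphism $[G(K),G(K)]\to[G(K'),G(K')]$, that is, a surjection $F_2\twoheadrightarrow F_2$; since finitely generated free groups are Hopfian, this restriction is an isomorphism. As $\varphi$ also induces a surjection, hence an isomorphism, on $H_1\cong\Z$, the five lemma applied to the two fibration sequences $1\to F_2\to G(\cdot)\to\Z\to 1$ shows that $\varphi$ is an isomorphism $G(3_1)\cong G(4_1)$. This is absurd: for instance $\Delta_{3_1}(t)=t^2-t+1\neq t^2-3t+1=\Delta_{4_1}(t)$ (equivalently, $G(3_1)$ has non-trivial center, while $G(4_1)$, being the group of a hyperbolic knot, has trivial center). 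Hence no such $K'$ exists, so $3_1$ and $4_1$ are minimal.

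I expect the classification of genus-one fibered knots to be the only substantial input; alternatively one could replace it by the fact that the trefoil and the figure-eight knot are each determined by their knot group, but that is a comparably deep ingredient. Everything else — the genus bound from Proposition~\ref{fiber_genus}, the Hopfian/five-lemma step, and the final non-isomorphism check — is formal.
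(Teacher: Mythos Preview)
Your proof is correct and tracks the paper's argument closely: both invoke Proposition~\ref{fiber_genus} to force $K'$ to be a genus-one fibered knot, then use the classification $\{3_1,4_1\}$ to reduce to ruling out the two cross epimorphisms. The only divergence is in that last step. The paper dispatches it in one line via the classical divisibility fact $\Delta_{K'}(t)\mid\Delta_K(t)$ (from \cite{CF}), observing that neither of $t^2-t+1$ and $t^2-3t+1$ divides the other. You instead run the Hopfian/five-lemma argument to upgrade the epimorphism to an isomorphism and then distinguish $G(3_1)$ from $G(4_1)$; this is exactly the content of the paper's subsequent Proposition~\ref{fiber_samegenus}, which you have in effect re-derived. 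The paper's route is quicker for this corollary, while yours is more structural and previews the mechanism used for the genus-two cases $5_1,6_2,6_3$.
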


\begin{proof}
We assume that $3_1\geq K$ and $K \neq 3_1$. 
By Proposition \ref{fiber_genus}, $K$ is a fibered knot of genus one 
or the trivial knot. 
It is well known that the fibered knots of genus one are only $3_1$ and $4_1$. 
If there exists an epimorphism $G(K) \to G(K')$, 
then the Alexander polynomial of $K$ can be divided by that of $K'$ (see \cite{CF}). 
However, the Alexander polynomial of $3_1$ cannot be divided by that of $4_1$. 
Hence it follows that $3_1 \ngeq 4_1$. 
Similarly, it is shown that $4_1$ is minimal. 
\end{proof}

Furthermore, we obtain the following. 

\begin{proposition}\label{fiber_samegenus}
Let $K,K'$ be fibered knots of the same genus. 
If $K\geq K'$, then $K = K'$. 
\end{proposition}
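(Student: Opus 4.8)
The plan is to exploit the structure of a fibered knot as a mapping torus, together with the genus-one-to-genus count from Proposition~\ref{fiber_genus}, to show that an epimorphism between the knot groups of two fibered knots of the same genus is forced to be an isomorphism.

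First I would recall that if $K$ is fibered of genus $g$, then $G(K)$ is an extension
\[
1 \to F_{2g} \to G(K) \to \Z \to 1,
\]
where $F_{2g} = [G(K),G(K)]$ is the free group of rank $2g$ (the fundamental group of the fiber surface), and the $\Z$ quotient is detected by the abelianization, i.e.\ by the meridian. Given a meridian-preserving epimorphism $\varphi\colon G(K)\to G(K')$ with both knots fibered of the same genus $g$, the induced map on abelianizations is the identity $\Z\to\Z$, so $\varphi$ restricts to an epimorphism $\varphi'\colon [G(K),G(K)] \to [G(K'),G(K')]$, that is, an epimorphism $F_{2g}\to F_{2g}$ between free groups of the same (finite) rank. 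The key algebraic fact is that a free group of finite rank is Hopfian, so any epimorphism $F_{2g}\to F_{2g}$ is an isomorphism; hence $\varphi'$ is an isomorphism.

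Next I would promote ``$\varphi'$ is an isomorphism'' to ``$\varphi$ is an isomorphism.'' Since $\varphi$ induces the identity on the $\Z$ quotients and an isomorphism on the kernels, the five lemma gives that $\varphi$ itself is an isomorphism of $G(K)$ onto $G(K')$. Finally, a meridian-preserving isomorphism of knot groups implies the knots are equivalent: by the work of Waldhausen (knot complements are aspherical and sufficiently large, so $\pi_1$ determines the complement up to homeomorphism) together with the Gordon--Luecke theorem (a knot is determined by its complement), and since $\varphi$ carries a meridian to a meridian the homeomorphism can be taken to respect the peripheral structure, we conclude $K = K'$.

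The main obstacle I anticipate is the bookkeeping at the level of peripheral structures: one must be careful that the isomorphism $\varphi$ is \emph{actually} the one respecting meridians (this is built into the hypothesis $K\geq K'$) and that Waldhausen's theorem is applied with the peripheral system so that Gordon--Luecke then yields equality of knots rather than merely homeomorphic complements; a mirror-image subtlety can in principle appear, but since $\varphi$ preserves the oriented meridian this is pinned down. An alternative, more elementary route that avoids Waldhausen and Gordon--Luecke would be to argue directly on the fibered picture: an isomorphism on $F_{2g}$ commuting (up to conjugacy and inner automorphisms) with the monodromies forces the monodromy of $K$ to be conjugate to that of $K'$ in the mapping class group of $\Sigma_{g,1}$, and mapping tori of conjugate monodromies are equivalent fibered knots; I would mention this as the structurally cleaner argument and use whichever is shorter to write out.
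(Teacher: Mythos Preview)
Your proof is correct and follows essentially the same approach as the paper: restrict the epimorphism to commutator subgroups, use that they are free of equal finite rank, invoke Hopficity to get an isomorphism there, deduce that $\varphi$ is an isomorphism, and conclude $K=K'$. The paper presents these steps more tersely --- it simply asserts that the isomorphism on $[G(K),G(K)]$ forces $\varphi$ to be an isomorphism and that an isomorphism of prime knot groups yields $K=K'$ --- whereas you make the five-lemma step and the Waldhausen/Gordon--Luecke step explicit.
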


\begin{proof}
By the proof of Proposition \ref{fiber_genus}, 
there exists an epimorphism 
between the commutator subgroups of the knot groups $G(K),G(K')$. 
By assumption, the commutator subgroups $[G(K),G(K)], [G(K'),G(K')]$  
are the free groups of the same rank. 
This implies the epimorphism 
$[G(K),G(K)]\rightarrow[G(K'),G(K')]$ is an isomorphism, 
since the free group is Hopfian. 
Then the epimorphism $G(K)\rightarrow G(K')$ is also an isomorphism. 
Therefore we obtain that $K = K'$ for prime knots $K,K'$. 
\end{proof}

\begin{corollary}
The knots $5_1$, $6_2$, and $6_3$ are minimal. 
\end{corollary}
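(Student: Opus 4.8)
The plan is to run the same argument used for $3_1$ and $4_1$, now combining \emph{both} Proposition~\ref{fiber_genus} and Proposition~\ref{fiber_samegenus} with Alexander polynomial divisibility. The relevant input is that $5_1$, $6_2$, and $6_3$ are all fibered knots of genus $2$ (for instance, $5_1$ is the $(2,5)$-torus knot). So fix $K\in\{5_1,6_2,6_3\}$, suppose $K\geq K'$ for a non-trivial knot $K'$, and assume toward a contradiction that $K'\neq K$.

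First I would apply Proposition~\ref{fiber_genus}: since $K$ is fibered, $K'$ is fibered and $g(K')\leq g(K)=2$, so $g(K')=1$ or $g(K')=2$. The genus $2$ case is immediate: then $K$ and $K'$ are fibered knots of the same genus, and Proposition~\ref{fiber_samegenus} gives $K=K'$, a contradiction. Hence $g(K')=1$.

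It then remains to handle $g(K')=1$. Using the classical fact (already invoked for $3_1$) that the only genus-one fibered knots are $3_1$ and $4_1$, we get $K'\in\{3_1,4_1\}$. Since $G(K)$ surjects onto $G(K')$, the polynomial $\Delta_{K'}(t)$ divides $\Delta_K(t)$, so it suffices to check that neither $\Delta_{3_1}(t)=t^2-t+1$ nor $\Delta_{4_1}(t)=t^2-3t+1$ divides any of $\Delta_{5_1}(t)=t^4-t^3+t^2-t+1$, $\Delta_{6_2}(t)=t^4-3t^3+3t^2-3t+1$, or $\Delta_{6_3}(t)=t^4-3t^3+5t^2-3t+1$. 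For example, $\Delta_{3_1}$ and $\Delta_{5_1}$ are distinct cyclotomic polynomials, $\Delta_{4_1}$ has real roots while $\Delta_{5_1}$ does not, and the four remaining cases fall to a one-line polynomial division; each check yields a contradiction. Therefore no such $K'$ exists, and $5_1$, $6_2$, $6_3$ are minimal.

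This is mostly bookkeeping once the two propositions are available; the one place needing actual (albeit routine) computation is the last step, namely the six Alexander polynomial non-divisibility checks, and the sole external ingredient is the classification of genus-one fibered knots.
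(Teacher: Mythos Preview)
Your proof is correct and follows essentially the same line as the paper's: use Proposition~\ref{fiber_genus} to reduce to fibered $K'$ of genus $1$ or $2$, dispose of the genus $2$ case via Proposition~\ref{fiber_samegenus}, and rule out $3_1,4_1$ in the genus $1$ case by Alexander polynomial divisibility. The only difference is presentational---you treat all three knots uniformly and spell out the polynomial checks, whereas the paper argues for $5_1$ and defers $6_2,6_3$ to ``the similar argument.''
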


\begin{proof}
It is known that $5_1$ is a fibered knot of genus two. 
Suppose $5_1\geq K$, then $K$ is a fibered knot of genus one or two. 
If the genus of $K$ is one, then $K$ is $3_1$ or $4_1$. 
However, the Alexander polynomial of $5_1$ can be divided 
by neither that of $3_1$ nor that of $4_1$. 
Then the genus of $K$ is two.
It implies $K = 5_1$ by Proposition \ref{fiber_samegenus}. 
By the similar argument, $6_2$ and $6_3$ are also minimal. 
\end{proof}

Boileau-Boyer-Reid-Wang \cite{BBRW} and Boileau-Boyer \cite{BB} 
studied epimorphisms of $2$-bridge knot groups. 
As an application, we get some minimal elements. 

\begin{proposition}\label{five_two}
The knots $5_2$ and $6_1$ are minimal.
\end{proposition}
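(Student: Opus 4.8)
The plan is to combine the classification of epimorphisms between $2$-bridge knot groups due to Boileau--Boyer--Reid--Wang \cite{BBRW} and Boileau--Boyer \cite{BB} with the elementary divisibility obstruction of the Alexander polynomial used above. Both $5_2$ and $6_1$ are $2$-bridge knots; writing a $2$-bridge knot as $\mathfrak b(p,q)$ (so that $p$ is its determinant), one has $p=\det(5_2)=7$ and $p=\det(6_1)=9$. Suppose $K\in\{5_2,6_1\}$ and $K\geq K'$ for some non-trivial knot $K'$, i.e.\ there is a meridian-preserving epimorphism $G(K)\to G(K')$.

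First I would invoke \cite{BBRW, BB}: since $K$ is a $2$-bridge knot, $K'$ is again a $2$-bridge knot, say $K'=\mathfrak b(p',q')$, and the epimorphism is equivalent to an Ohtsuki--Riley--Sakuma epimorphism; in particular $p'\mid p$, and if $p'=p$ then $K'=K$. (If one wishes to use only the weaker part of \cite{BBRW} — that $K'$ is $2$-bridge and there are finitely many possibilities — the same conclusion can still be recovered afterwards, because a $2$-bridge knot is alternating, so twice its genus is the breadth of its Alexander polynomial, and the genus-one $2$-bridge knots of a given determinant are easily listed.)

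For $K=5_2$ this already ends the argument. A non-trivial $2$-bridge knot $\mathfrak b(p',q')$ has $p'>1$, and $p'\mid 7$ with $7$ prime forces $p'=7=p$, hence $K'=5_2$, contradicting $K'\neq 5_2$. Therefore $5_2$ is minimal.

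For $K=6_1$ the condition $p'\mid 9$ with $p'>1$ leaves $p'\in\{3,9\}$. If $p'=9=p$, then $K'=6_1$ as before. If $p'=3$, then $K'=\mathfrak b(3,1)=3_1$, the unique non-trivial $2$-bridge knot of determinant $3$, so $6_1\geq 3_1$; but an epimorphism $G(6_1)\to G(3_1)$ would force $\Delta_{3_1}(t)\mid\Delta_{6_1}(t)$ by \cite{CF}, while $\Delta_{3_1}(t)=t^2-t+1$ is coprime in $\Q[t]$ to $\Delta_{6_1}(t)=2t^2-5t+2=(2t-1)(t-2)$. Hence this case cannot occur, so $K'=6_1$ and $6_1$ is minimal. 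The only genuine obstacle is importing the structure theorem of \cite{BBRW, BB} in a sufficiently precise form; once it is in hand, the remaining work is the short divisibility check above.
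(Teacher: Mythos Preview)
Your proof is correct and follows essentially the same approach as the paper: both invoke \cite{BBRW} to force the target to be $2$-bridge, then use the determinant divisibility from \cite{BB} to finish $5_2$ immediately and to reduce the $6_1$ case to ruling out $3_1$ via the Alexander polynomial. Your write-up is somewhat more explicit (noting $\mathfrak b(3,1)=3_1$ is the unique candidate and factoring $\Delta_{6_1}$), but the substance is identical.
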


\begin{proof}
By the result of Boileau-Boyer-Reid-Wang \cite{BBRW}, 
if a $2$-bridge knot group surjects onto $G(K)$, 
then $K$ is a $2$-bridge knot or the trivial knot. 
Moreover, Boileau-Boyer \cite{BB} showed that 
if there exists an epimorphism between $2$-bridge knot groups, 
namely $S(p,q) \geq S(p',q')$, then $p=kp'$ where $k>1$. 
Then $5_2=S(7,3)$ is minimal. 
Besides, it is easy to see that 
the Alexander polynomial of $6_1 = S(9,7)$ cannot be divided by that of $3_1 = S(3,1)$. 
Therefore there exists no epimorphism from $G(6_1)$ onto $G(3_1)$. 
\end{proof}

\section{Fibered knot with irreducible Alexander polynomial}

\begin{proposition}
If $K$ is a fibered knot and the Alexander polynomial $\Delta_K(t)$ is irreducible over $\Z$, 
then $K$ is minimal. 
\end{proposition}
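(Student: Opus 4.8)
The plan is to combine Proposition \ref{fiber_genus} with the divisibility of Alexander polynomials in essentially the same way as in the preceding corollaries, the new input being the irreducibility hypothesis. Suppose $K \geq K'$ for some non-trivial knot $K'$. Since $K$ is fibered, Proposition \ref{fiber_genus} tells us that $K'$ is also fibered and that $g(K) \geq g(K')$. The Alexander polynomial of a fibered knot of genus $g$ is monic of degree $2g$ (its top and bottom coefficients are $\pm 1$), so $\Delta_{K'}(t)$ is a monic polynomial of degree $2g(K')$.

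Next I would invoke the divisibility statement already recalled in the proof of the corollary on $3_1$ and $4_1$: an epimorphism $G(K) \to G(K')$ forces $\Delta_{K'}(t)$ to divide $\Delta_K(t)$ (over $\Z$, after the usual normalization up to units $\pm t^{k}$; see \cite{CF}). Now $\Delta_K(t)$ is irreducible over $\Z$. A non-trivial knot has $\Delta_{K'}(t)$ not equal to a unit — indeed for a fibered knot $\Delta_{K'}$ has degree $2g(K') \geq 2$ since the only genus-zero knot is the trivial one. Hence $\Delta_{K'}(t)$, being a non-unit divisor of the irreducible polynomial $\Delta_K(t)$, must be an associate of $\Delta_K(t)$; in particular $\deg \Delta_{K'} = \deg \Delta_K$, so $2g(K') = 2g(K)$, i.e. $K$ and $K'$ have the same genus.

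Finally, having shown that $K$ and $K'$ are fibered knots of the same genus with $K \geq K'$, Proposition \ref{fiber_samegenus} immediately gives $K = K'$. This contradicts the assumption that $K'$ is a non-trivial knot different from $K$ (or rather shows the only possibility is $K' = K$), so $K$ surjects onto no non-trivial knot group other than its own, which is exactly what it means for $K$ to be minimal.

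The only point requiring any care — and the step I would want to state precisely rather than wave at — is the normalization of the Alexander polynomial so that "irreducible over $\Z$" and "divides" are compatible: $\Delta_K(t)$ is only defined up to multiplication by $\pm t^k$, so one should fix the symmetric normalization $\Delta_K(t) = \Delta_K(t^{-1})$ with $\Delta_K(1) = 1$, note that this representative genuinely lies in $\Z[t]$ up to a factor of $t^{g}$, and check that the Crowell–Fox divisibility respects this choice. Once that bookkeeping is pinned down, there is no real obstacle; the argument is a direct concatenation of the three results already proved, with irreducibility doing exactly the work of forcing equality of genus.
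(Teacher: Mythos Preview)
Your proof is correct and follows essentially the same route as the paper: use Proposition~\ref{fiber_genus} to get $K'$ fibered, use divisibility plus irreducibility of $\Delta_K$ to force $\Delta_K=\Delta_{K'}$ and hence equal genus, then invoke Proposition~\ref{fiber_samegenus}. Your additional remarks on why $\Delta_{K'}$ is not a unit (via the degree--genus relation for fibered knots) and on the normalization issue are more detailed than the paper's version, but the skeleton of the argument is identical.
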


\begin{proof}
Let $K$ be a fibered knot whose Alexander polynomial is irreducible over $\Z$. 
Suppose that $K\geq K'$ and that $K'$ is non-trivial. 
Then $\Delta_{K}(t)$ can be divided by $\Delta_{K'}(t)$ and $K'$ is fibered by Proposition \ref{fiber_genus}. 
Here the Alexander polynomial of $K'$ is not trivial. 
Since $\Delta_{K}(t)$ is irreducible, we have $\Delta_{K}(t) = \Delta_{K'}(t)$. 
It implies that the genus of $K$ is the same as that of $K'$. 
Therefore $K = K'$ by Proposition \ref{fiber_samegenus}. 
\end{proof}

\begin{example}
We consider $8_{16}$, which is a fibered knot of genus $3$. 
Since $8_{16}$ is a $3$-bridge knot, 
we cannot apply the similar argument to Proposition \ref{five_two}. 
The Alexander polynomial is $t^6 - 4 t^5 + 8 t^4 - 9 t^3 + 8 t^2 - 4 t + 1$, 
which is irreducible over $\Z$. 
Then we obtain that $8_{16}$ is minimal. 
Similarly, we can show that $8_{17}$ is also minimal. 
\end{example}

\end{document}